\newtheorem{thm}{Theorem}[section]
\newtheorem{prop}[thm]{Proposition}
\newtheorem{lem}[thm]{Lemma}
\newtheorem{rem}[thm]{Remark}
\newtheorem{ex}[thm]{Example}
\newcommand{\ds}{\displaystyle}
\begin{document}

\title[A note on weaving fusion frames]{A note on weaving fusion frames}


\author[A. Bhandari]{Animesh Bhandari}
\address{Department of Mathematics\\ SRM University, {\it AP} - Andhra Pradesh\\ Neeru Konda, Amaravati, Andhra Pradesh - 522502\\ India}
\email{bhandari.animesh@gmail.com, animesh.b@srmap.edu.in}
\subjclass[2010]{42C15, 46A32, 47A05}

\begin{abstract}
Fusion frames are widely studied for their applications in recovering signals from large data. These are proved to be very useful in many areas, such as, distributed processing, wireless sensor networks, packet encoding. Inspired by the work of Bemrose et al.\cite{Be16}, this paper delves into the properties and characterizations of weaving fusion frames.


\noindent \textbf{Keywords:} frame; fusion frame; weaving fusion frames.
\end{abstract}

\maketitle

\section{Introduction}
The concept of Hilbert space frames was first introduced by Duffin and Schaeffer \cite{DuSc52} in 1952. After several decades, in 1986, the importance of frame
theory was popularized by work as in the groundbreaking work by Daubechies, Grossman and Meyer \cite{DaGrMa86}. Since then frame theory has been widely used by mathematicians and engineers in various fields of mathematics and engineering, namely, operator theory \cite{HaLa00}, harmonic analysis \cite{Gr01}, wavelet analysis \cite{GuSh18}, signal processing \cite{Fe99}, image processing \cite{ArZe16}, sensor network \cite{CaKuLiRo07}, data analysis \cite{CaKu12}, Retro Banach Frame \cite{VeVaSi19}, etc.

Frame theory literature became richer through several generalizations-fusion frame (frames of subspaces) \cite{CaKu04, CaKuLi08} , $G$-frame (generalized frames) \cite{Su06}, $K$-frame (atomic systems) \cite{Ga12},
$K$-fusion frame (atomic subspaces) \cite{Bh17},
etc. and these generalizations have been proved to be useful in various applications.

Classical frames have been instrumental in signal processing and functional analysis, providing a stable and redundant way to represent signals. However, they face significant limitations in distributed processing, particularly when it comes to projecting signals onto multidimensional subspaces. This limitation is crucial in applications like wireless sensor networks, where data is collected and processed across multiple sensors, and in packet encoding, where robustness and redundancy are essential. To address these challenges, fusion frames were introduced, extending the concept of frames to collections of subspaces with associated weights.

Fusion frames have proven to be highly effective in distributed processing, enabling more flexible and stable signal representations across multiple subspaces. This has made them particularly valuable in practical applications like wireless sensor networks, distributed signal processing, and error-resilient data transmission in packet encoding.

Beyond these practical uses, fusion frames have also emerged as a powerful tool in theoretical research. They play a significant role in the solution of the Kadison-Singer problem, a long-standing question in operator theory, and in optimal subspace packing, which is important for coding theory and communications. The rapid development of fusion frame theory over the years has led to a wide range of applications and a deeper understanding of their mathematical properties.

In this paper, we explore the various properties and characterizations of weaving fusion frames, a concept that extends traditional fusion frames by allowing the interweaving of subspaces. We delve into their structural aspects and provide theoretical insights into their stability and robustness. 


Throughout this paper, $\mathcal{H}$ is a separable Hilbert space. We denote by $\mathcal{L}(\mathcal{H}_1, \mathcal{H}_2)$ the space of all bounded linear operators from  $\mathcal{H}_1$ into $\mathcal{H}_2$, and $\mathcal L(\mathcal H)$ for $\mathcal L(\mathcal H, \mathcal H)$. For $T \in \mathcal{L}\mathcal{(H)}$, we denote $D(T), N(T)$ and $R(T)$ for domain, null space and range of $T$, respectively. For a collection of closed subspaces $\mathcal W_i$ of $\mathcal H$ and scalars $w_i$, $i\in I$, the weighted collection of closed subspaces $\lbrace (\mathcal W_i , w_i) \rbrace_{i \in I}$ is denoted by $\mathcal W_w$. We consider $I$ to be countable index set, $\mathcal I$ is the identity operator and $P_{\mathcal V}$ is the orthogonal projection onto $\mathcal V$.

\section{Preliminaries}\label{Sec-Preli}

Before diving into the main sections, throughout this section we recall basic definitions and results needed in this paper. For detailed discussion regarding frames and its applications we refer \cite{CaKu12, Ch08}.

\subsection{Frame} A collection  $\{ f_i \}_{i\in I}$ in $\mathcal{H}$ is called a \emph{frame} if there exist constants $A,B >0$ such that \begin{equation}\label{Eq:Frame} A\|f\|^2~ \leq ~\sum_{i\in I} |\langle f,f_i\rangle|^2 ~\leq ~B\|f\|^2,\end{equation}for all $f \in \mathcal{H}$. The numbers $A, B$ are called \emph{frame bounds}. The supremum over all $A$'s and infimum over all $B$'s satisfying above inequality are called the \emph{optimal frame bounds}.
If a collection satisfies only the right inequality in (\ref{Eq:Frame}), it is called a {\it Bessel sequence}.

Given a frame $\{f_i\}_{i\in I}$ for $\mathcal{H}$, the \emph{pre-frame operator} or \emph{synthesis operator} is a bounded linear operator $T: l^2(I)\rightarrow \mathcal{H}$ and is defined by $T\{c_i\}_{i \in I} = \ds \sum_{i\in I} c_i f_i$. The adjoint of $T$, $T^*: \mathcal{H} \rightarrow l^2(I)$, given by $T^*f = \{\langle f, f_i\rangle\}_{i \in I}$, is called the \emph{analysis operator}. The \emph{frame operator}, $S=TT^*: \mathcal{H}\rightarrow \mathcal{H}$, is defined by $$Sf=TT^*f = \sum_{i\in I} \langle f, f_i\rangle f_i.$$ It is well-known that the frame operator is bounded, positive, self adjoint and invertible.

\subsection{Fusion frame}\label{Fusion} Consider a weighted collection of closed subspaces, $\mathcal W_w=\{(\mathcal W_i, w_i)\}_{i \in I}$, of $\mathcal H$. Then $\mathcal W_w$  is said to be a fusion frame for $\mathcal H$, if there exist constants $0<A \leq B <\infty$ satisfying
\begin{equation}\label{Eq:Fus-frame}
	A\|f\|^2 \leq \sum_{i \in I} w_i ^2 \|P_{\mathcal W_i}f\|^2 \leq B\|f\|^2, \end{equation}
where $P_{\mathcal W_i}$ is the orthogonal projection from $\mathcal H$ onto $\mathcal W_i$.
The constants $A$ and $B$ are called {\it fusion frame bounds}.
A collection of closed subspaces, satisfying only the right inequality in (\ref{Eq:Fus-frame}), is called a fusion Bessel sequence.


For a family of closed subspaces, $\lbrace \mathcal W_i \rbrace_{i \in I}$, of $\mathcal H$, the associated $l^2$ space is defined by $\left(\sum\limits_{i \in I} \bigoplus \mathcal W_i\right)_{l^2} = \lbrace \lbrace f_i \rbrace_{i \in I} : f_i \in \mathcal W_i, \sum\limits_{i \in I} \|f_i\|^2 < \infty \rbrace$ with the inner product $\left \langle \lbrace f_i \rbrace_{i \in I}, \lbrace g_i \rbrace_{i \in I} \right \rangle_{\left(\sum\limits_{i \in I} \bigoplus \mathcal W_i\right)_{l^2}} = \sum\limits_{i \in I} \langle f_i , g_i \rangle_{\mathcal{H}} $ and the norm is $\|\{f_i\}_{i \in I}\|_{\left(\sum\limits_{i \in I} \bigoplus \mathcal W_i\right)_{l^2}}^2 = \sum\limits_{i \in I}\|f_i\|^2$. It is easy to see that $\left(\sum\limits_{i \in I} \bigoplus \mathcal W_i\right)_{l^2}$ is a Hilbert space. In this context the corresponding dense inclusion is defined as $\mathcal L_{\mathcal W}^{00}=\left(\sum\limits_{i \in I} \bigoplus \mathcal W_i\right)_{l^{00}}=\{\{f_i\}_{i \in I}\in \{\mathcal W_i\}_{i \in I}: f_i=0 \text{~~for~~ all ~~but ~~finitely ~~many~~} i\} \subseteq \left(\sum\limits_{i \in I} \bigoplus \mathcal W_i\right)_{l^2} = \mathcal L_{\mathcal W}^2$. For the analogous dense inclusion we have $\mathcal L_{\mathcal H}^{00} = \left(\sum\limits_{i \in I} \bigoplus \mathcal H_i\right)_{l^{00}} \subseteq \left(\sum\limits_{i \in I} \bigoplus \mathcal H_i\right)_{l^2} = \mathcal L_{\mathcal H}^2$.

Let $\mathcal W_w$ be a fusion frame. Then the associated synthesis operator,
$T_{\mathcal W}: D(T_{\mathcal W}) \subseteq \mathcal L_{\mathcal H}^2\rightarrow \mathcal H$ is defined as $T_{\mathcal W} (\lbrace f_i \rbrace_{i \in I})=\sum \limits_{i \in I} w_i P_{\mathcal W_i}f_i$ , where $D(T_{\mathcal W})=\left\{\{f_i\}_{i \in I} \in \mathcal L_{\mathcal H}^2: \sum\limits_{i \in I}w_i^2P_{\mathcal W_i}f_i  ~~\text{convergent}\right \}$. Since $\mathcal L_{\mathcal H}^{00} \subset D(T_{\mathcal W})$ and it is dense in $\mathcal L_{\mathcal H}^2$, hence the synthesis operator is densely defined. 

On the other hand for every $f \in \mathcal H$ and $\{f_i\}_{i \in I} \in \mathcal L_{\mathcal H}^2$ we obtain,
\begin{eqnarray*}
\langle T_{\mathcal W}^*f, \{f_i\}_{i \in I} \rangle_{\mathcal L_{\mathcal H}^2} = \langle f,  T_{\mathcal W} (\{f_i\}_{i \in I})\rangle_{\mathcal H} &=& \left \langle f,  \sum \limits_{i \in I} w_i P_{\mathcal W_i}f_i \right\rangle_{\mathcal H}\\
&=& \sum \limits_{i \in I} \langle f, w_i P_{\mathcal W_i} f_i \rangle\\
&=& \sum \limits_{i \in I} \langle w_i P_{\mathcal W_i} f,  f_i \rangle\\
&=& \langle \{w_i P_{\mathcal W_i} f\}_{i \in I}, \{f_i\}_{i \in I} \rangle
\end{eqnarray*}

and hence the adjoint of synthesis operator,  $T^*_{\mathcal W}: D(T^*_{\mathcal W}) \subseteq \mathcal H \rightarrow  \mathcal L_{\mathcal H}^2 $ is defined as  $T^*_{\mathcal W} (f)=\lbrace v_i P_{\mathcal W_i} (f) \rbrace_{i \in I} $, which is known as analysis operator, where $D(T^*_{\mathcal W})=\{f \in \mathcal H: \{w_i P_{\mathcal W_i} f\}_{i \in I} \in \mathcal L_{\mathcal H}^2\}$. It is well-known that (see \cite{CaKu04}) the synthesis operator $ T_{\mathcal W}$ of a fusion frame is bounded, linear and onto, whereas the corresponding analysis operator $ T_{\mathcal W} ^*$  is (possibly into) an isomorphism. If we consider the composition of synthesis and analysis operator we obtain the corresponding fusion frame operator which is defined as $S_{\mathcal W} (f) = T_{\mathcal W}T_{\mathcal W}^*(f)$ $=\sum \limits_{i \in I} w_i ^2 P_{\mathcal W_i}(f)$. $S_\mathcal{W}$ is bounded, positive, self adjoint and invertible. Thus  every $f \in \mathcal H$ can be expressed by its fusion frame measurements $\lbrace w_i P_{\mathcal W_i} f \rbrace_{i \in I}$ as
\begin{equation}\label{Eq:Fusion-frame-recons} f=\sum_{i \in I} v_i S_\mathcal W ^{-1} (v_i P_{\mathcal W_i} f) .\end{equation}


\subsection{Weaving Fusion frames}
Let   $\mathcal V_v=\{(\mathcal V_i, v_i)\}_{i \in I}$ and  $\mathcal W_w=\{(\mathcal W_i, w_i)\}_{i \in I}$ be two fusion frames for $\mathcal H$. Then they are called weaving fusion frames for $\mathcal H$ if for every $\sigma \subset I$ and every $f \in \mathcal H$ there exist finite positive constants $A \leq B$ so that $\{(\mathcal V_i, v_i)\}_{i \in \sigma} \cup \{(\mathcal W_i, w_i)\}_{i \in \sigma^c}$ is a fusion frame for $\mathcal H$ with the universal bounds $A \leq B$, i.e.  the following inequality is satisfied:
\begin{equation}\label{weaving}
A \|f\|^2 \leq \sum_{i \in \sigma} v_i ^2 \|P_{\mathcal V_i}f\|^2 +  \sum_{i \in \sigma^c} w_i ^2 \|P_{\mathcal W_i}f\|^2\leq B\|f\|^2.
\end{equation}

\begin{ex}
Let us consider an orthonormal basis $\{e_n\}_{n=1}^\infty$ for $\mathcal H$. Suppose for every $n$, $\mathcal V_n = \text{span} \{e_n\}$ and $\mathcal W_n = \text{span} \{e_n, e_{n+1}\}$. Since for every $f \in \mathcal H$ and $\sigma \subset \{1, 2, \cdots\}$ we have,
$$\|f\|^2 \leq \sum_{n \in \sigma}  \|P_{\mathcal V_n}f\|^2 +  \sum_{n \in \sigma^c}  \|P_{\mathcal W_n}f\|^2\leq 2\|f\|^2.$$

Therefore, $\{(\mathcal V_n, 1)\}_{n=1}^\infty$ and $\{(\mathcal W_n, 1)\}_{n=1}^\infty$ are weaving fusion frames for $\mathcal H$.
\end{ex}

In the following example we discuss a non-example of weaving fusion frames.

\begin{ex}
Let us consider an orthonormal basis $\{e_n\}_{n=1}^\infty$ for $\mathcal H$. Suppose for every $n$, $\mathcal V_n = \text{span} \{e_n\}$ and $\mathcal W_1 = \text{span} \{e_2\}$, $\mathcal W_2 = \text{span} \{e_1\}$ and $\mathcal W_n = \text{span} \{e_n\}$ for $n \geq 3$. Since for  $\sigma =\{2\} \subset \{1, 2, \cdots\}$ we have,
$$\sum_{n \in \sigma}  \|P_{\mathcal W_n}e_2\|^2 +  \sum_{n \in \sigma^c}  \|P_{\mathcal V_n}e_2\|^2=0$$

Thus, $\{(\mathcal V_n, 1)\}_{n=1}^\infty$ and $\{(\mathcal W_n, 1)\}_{n=1}^\infty$ are not weaving fusion frames for $\mathcal H$.
\end{ex}

\begin{rem}\label{Remark}
For weaving fusion frames we can define the associated weaving synthesis, analysis and weaving fusion frame operators analogous to the section \ref{Fusion}.
\end{rem}

For detailed discussion regarding weaving fusion frames we refer \cite{De17}.

\section{Main Results}\label{Sec-char}

In this section we discuss various characterizations of weaving fusion frames.

\begin{thm}
Let $\mathcal V_v=\{(\mathcal V_i, v_i)\}_{i \in I}$ and  $\mathcal W_w=\{(\mathcal W_i, w_i)\}_{i \in I}$ be two families of weighted closed subspaces of $\mathcal H$. Suppose $\{f_{ij}\}_{j \in J_i}$ and $\{g_{ij}\}_{j \in J_i}$ are frames for $\mathcal V_i$ and $\mathcal W_i$ with bounds $A_i, B_i$ and $C_i, D_i$ respectively for every $i \in I$. If $0<A=\inf\limits_{i \in I} A_i \leq \sup\limits_{i \in I}B_i =B <\infty$ and $0<C=\inf\limits_{i \in I} C_i \leq \sup\limits_{i \in I}D_i =D <\infty$. Then the following are equivalent:
\begin{enumerate}
\item $\{(\mathcal V_i, v_i)\}_{i \in I}$ and $\{(\mathcal W_i, w_i)\}_{i \in I}$ are weaving fusion frames for $\mathcal H$.

\item $\{v_i f_{ij}\}_{i \in I, j \in J_i}$ and $\{w_i g_{ij}\}_{i \in I, j \in J_i}$ are weaving frames for $\mathcal H$.
\end{enumerate}
\end{thm}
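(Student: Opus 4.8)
The plan is to reduce the whole equivalence to a single fiberwise estimate and then sum over the woven index set. The starting observation is that for each fixed $i$ the frame $\{f_{ij}\}_{j \in J_i}$ lives inside $\mathcal V_i$, so $\langle f, f_{ij}\rangle = \langle P_{\mathcal V_i} f, f_{ij}\rangle$ for every $f \in \mathcal H$ (the component $(\mathcal I - P_{\mathcal V_i})f$ is orthogonal to $\mathcal V_i$). Applying the frame inequality for $\mathcal V_i$ to the vector $P_{\mathcal V_i} f \in \mathcal V_i$ and inserting the weight $v_i$ therefore yields
\begin{equation}\label{Eq:fiber-V}
A\, v_i^2 \|P_{\mathcal V_i} f\|^2 \leq \sum_{j \in J_i} |\langle f, v_i f_{ij}\rangle|^2 \leq B\, v_i^2 \|P_{\mathcal V_i} f\|^2,
\end{equation}
where I use the uniform bounds $A \leq A_i$ and $B_i \leq B$. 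The identical argument for $\{g_{ij}\}_{j \in J_i} \subset \mathcal W_i$ together with $C \leq C_i$ and $D_i \leq D$ gives
\begin{equation}\label{Eq:fiber-W}
C\, w_i^2 \|P_{\mathcal W_i} f\|^2 \leq \sum_{j \in J_i} |\langle f, w_i g_{ij}\rangle|^2 \leq D\, w_i^2 \|P_{\mathcal W_i} f\|^2.
\end{equation}

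Next I would fix an arbitrary $\sigma \subset I$ and add \eqref{Eq:fiber-V} over $i \in \sigma$ and \eqref{Eq:fiber-W} over $i \in \sigma^c$. Writing
\[
S_{\mathrm{fus}}(f,\sigma) = \sum_{i \in \sigma} v_i^2 \|P_{\mathcal V_i} f\|^2 + \sum_{i \in \sigma^c} w_i^2 \|P_{\mathcal W_i} f\|^2
\]
for the woven fusion-frame sum appearing in \eqref{weaving}, and
\[
S_{\mathrm{fr}}(f,\sigma) = \sum_{i \in \sigma} \sum_{j \in J_i} |\langle f, v_i f_{ij}\rangle|^2 + \sum_{i \in \sigma^c} \sum_{j \in J_i} |\langle f, w_i g_{ij}\rangle|^2
\]
for the corresponding woven frame sum, the summed inequalities collapse to the single two-sided estimate
\begin{equation}\label{Eq:sandwich}
\min(A,C)\, S_{\mathrm{fus}}(f,\sigma) \leq S_{\mathrm{fr}}(f,\sigma) \leq \max(B,D)\, S_{\mathrm{fus}}(f,\sigma),
\end{equation}
valid for every $f \in \mathcal H$ and every $\sigma \subset I$, \emph{with constants independent of $\sigma$}. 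This is the heart of the argument; the role of the uniform hypotheses $0 < A \leq B < \infty$ and $0 < C \leq D < \infty$ is precisely to make the sandwiching constants in \eqref{Eq:sandwich} universal rather than index-dependent.

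With \eqref{Eq:sandwich} in hand both implications are immediate and purely algebraic. For $(1) \Rightarrow (2)$, if $A' \leq B'$ are universal weaving fusion frame bounds, i.e. $A' \|f\|^2 \leq S_{\mathrm{fus}}(f,\sigma) \leq B' \|f\|^2$ for all $\sigma$, then \eqref{Eq:sandwich} gives $\min(A,C)A' \|f\|^2 \leq S_{\mathrm{fr}}(f,\sigma) \leq \max(B,D)B' \|f\|^2$, so $\{v_i f_{ij}\}$ and $\{w_i g_{ij}\}$ are weaving frames with universal bounds $\min(A,C)A'$ and $\max(B,D)B'$. Conversely, for $(2) \Rightarrow (1)$, if $A'' \|f\|^2 \leq S_{\mathrm{fr}}(f,\sigma) \leq B'' \|f\|^2$, then dividing \eqref{Eq:sandwich} through yields $\tfrac{A''}{\max(B,D)} \|f\|^2 \leq S_{\mathrm{fus}}(f,\sigma) \leq \tfrac{B''}{\min(A,C)} \|f\|^2$, which are exactly the universal bounds required in \eqref{weaving}.

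The one genuine subtlety I anticipate is bookkeeping at the level of the index set: the woven frame in (2) must be interpreted so that the partition is induced by $\sigma \subset I$ acting blockwise on the fibers (for $i \in \sigma$ one keeps the entire block $\{v_i f_{ij}\}_{j \in J_i}$, and for $i \in \sigma^c$ the entire block $\{w_i g_{ij}\}_{j \in J_i}$), exactly mirroring \eqref{weaving}. Splitting within a single fiber would sever the correspondence and the equivalence can genuinely fail. Granting this reading, no convergence issue needs separate treatment, since the Bessel-type upper bounds in \eqref{Eq:fiber-V} and \eqref{Eq:fiber-W} already force every sum to converge, and the remainder follows from \eqref{Eq:sandwich} alone.
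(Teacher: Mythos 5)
Your proposal is correct and follows essentially the same route as the paper: both arguments hinge on the identity $\langle f, f_{ij}\rangle = \langle P_{\mathcal V_i}f, f_{ij}\rangle$ to sandwich the woven frame sum between $\min(A,C)$ and $\max(B,D)$ times the woven fusion-frame sum, with both implications then read off from that two-sided estimate. Your write-up is in fact a cleaner organization of the paper's chain of inequalities (which contains a few index typos, e.g.\ $\sigma$ where $\sigma^c$ is meant), so no further changes are needed.
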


\begin{proof}
Let us suppose $\alpha = \text{min} (A, C)$ and $\beta = \text{max} (B, D)$.

\noindent (\underline{1 $\implies$ 2}) Since $\{f_{ij}\}_{j \in J_i}$ and $\{g_{ij}\}_{j \in J_i}$ are frames for $\mathcal V_i$ and $\mathcal W_i$ respectively with the respective bounds, then for every $f \in\mathcal H$ and every $\sigma \subset I$ we have,
\begin{eqnarray*}
 \alpha  ( \sum\limits_{i \in \sigma} v_i^2 \|P_{\mathcal V_i}f\|^2  &+ & \sum\limits_{i \in \sigma^c} w_i^2 \|P_{\mathcal W_i}f\|^2  ) \\
& \leq & A \sum\limits_{i \in \sigma} v_i^2 \|P_{\mathcal V_i}f\|^2 + C \sum\limits_{i \in \sigma^c} w_i^2 \|P_{\mathcal W_i}f\|^2\\
&\leq & \sum\limits_{i \in \sigma} v_i^2 A_i \|P_{\mathcal V_i}f\|^2 +  \sum\limits_{i \in \sigma^c} w_i^2 C_i \|P_{\mathcal W_i}f\|^2\\
& \leq & \sum\limits_{i \in \sigma} v_i^2 \sum\limits_{j \in J_i} |\langle P_{\mathcal V_i}f, f_{ij} \rangle|^2 +  \sum\limits_{i \in \sigma} w_i^2 \sum\limits_{j \in J_i} |\langle P_{\mathcal W_i}f, g_{ij} \rangle|^2\\
& = & \sum\limits_{i \in \sigma} \sum\limits_{j \in J_i} |\langle f, v_i f_{ij} \rangle|^2 +  \sum\limits_{i \in \sigma}  \sum\limits_{j \in J_i} |\langle f, w_i g_{ij} \rangle|^2\\
& \leq & \sum\limits_{i \in \sigma} v_i^2 B_i \|P_{\mathcal V_i}f\|^2 +  \sum\limits_{i \in \sigma^c} w_i^2 D_i \|P_{\mathcal W_i}f\|^2\\
& \leq & B \sum\limits_{i \in \sigma} v_i^2 \|P_{\mathcal V_i}f\|^2 +  D \sum\limits_{i \in \sigma^c} w_i^2  \|P_{\mathcal W_i}f\|^2\\
& \leq & \beta (\sum\limits_{i \in \sigma} v_i^2 \|P_{\mathcal V_i}f\|^2 +   \sum\limits_{i \in \sigma^c} w_i^2  \|P_{\mathcal W_i}f\|^2)
\end{eqnarray*}
Thus if $\{(\mathcal V_i, v_i)\}_{i \in I}$ and $\{(\mathcal W_i, w_i)\}_{i \in I}$ are weaving fusion frames for $\mathcal H$ with bounds $A_{\mathcal V \mathcal W} \leq B_{\mathcal V \mathcal W}$, then for every $f \in \mathcal H$ and every $\sigma \subset I$ we have,
$$\alpha A_{\mathcal V \mathcal W} \|f\|^2 \leq \sum\limits_{i \in \sigma}  \sum\limits_{j \in J_i} |\langle f, v_i f_{ij} \rangle|^2 + \sum\limits_{i \in \sigma^c}  \sum\limits_{j \in J_i} |\langle f, w_i f_{ij} \rangle|^2 \leq  \beta B_{\mathcal V \mathcal W}.$$
Therefore, $\{v_i f_{ij}\}_{i \in I, j \in J_i}$ and $\{w_i g_{ij}\}_{i \in I, j \in J_i}$ are weaving frames for $\mathcal H$.\\

\noindent (\underline{2 $\implies$ 1}) Conversely, if $\{v_i f_{ij}\}_{i \in I, j \in J_i}$ and $\{w_i g_{ij}\}_{i \in I, j \in J_i}$ are weaving frames for $\mathcal H$ with bounds $A_{vw} \leq B_{vw}$, then for every $f \in \mathcal H$ and every $\sigma \subset I$ we have,
$$\frac{A_{vw} } {\beta} \leq \sum\limits_{i \in \sigma} v_i^2 \|P_{\mathcal V_i}f\|^2 +   \sum\limits_{i \in \sigma^c} w_i^2  \|P_{\mathcal W_i}f\|^2 \leq \frac{B_{vw} } {\alpha}.$$
Consequently, $\{(\mathcal V_i, v_i)\}_{i \in I}$ and $\{(\mathcal W_i, w_i)\}_{i \in I}$ are weaving fusion frames for $\mathcal H$.
\end{proof} 

We characterize weaving fusion frames by means of the associated weaving fusion frame operator.

\begin{lem}
Let $\{(\mathcal V_i, v_i)\}_{i \in I}$ and  $\{(\mathcal W_i, w_i)\}_{i \in I}$ be two fusion frames for $\mathcal H$ with bounds $A \leq B$ and $C \leq D$ respectively. Then the following are equivalent:
\begin{enumerate}
\item $\{(\mathcal V_i, v_i)\}_{i \in I}$ and $\{(\mathcal W_i, w_i)\}_{i \in I}$ are weaving fusion frames for $\mathcal H$.

\item For every $\sigma \subset I$, suppose $S_{\mathcal V \mathcal W_{\sigma}}$ is the corresponding weaving fusion frame operator, then for every $f \in \mathcal H$ there exist $\alpha >0$ independent of $\sigma$, we have $\|S_{\mathcal V \mathcal W_{\sigma}} f\| \geq \alpha \|f\|.$
\end{enumerate}
\end{lem}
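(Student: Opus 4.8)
The plan is to work entirely through the quadratic form of the weaving fusion frame operator and exploit its positivity and self-adjointness. By Remark \ref{Remark}, for a fixed $\sigma \subset I$ the weaving fusion frame operator is
\[
S_{\mathcal V \mathcal W_\sigma} f = \sum_{i \in \sigma} v_i^2 P_{\mathcal V_i} f + \sum_{i \in \sigma^c} w_i^2 P_{\mathcal W_i} f,
\]
and the first thing I would record is the identity
\[
\langle S_{\mathcal V \mathcal W_\sigma} f, f \rangle = \sum_{i \in \sigma} v_i^2 \|P_{\mathcal V_i} f\|^2 + \sum_{i \in \sigma^c} w_i^2 \|P_{\mathcal W_i} f\|^2,
\]
which holds because each $P_{\mathcal V_i}$ is a self-adjoint idempotent, so $\langle P_{\mathcal V_i} f, f\rangle = \|P_{\mathcal V_i} f\|^2$. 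Thus the middle term of the weaving inequality \eqref{weaving} is precisely $\langle S_{\mathcal V \mathcal W_\sigma} f, f\rangle$, and $S_{\mathcal V \mathcal W_\sigma}$ is positive and self-adjoint. I would also observe immediately that the upper bound in \eqref{weaving} is automatic and uniform in $\sigma$: since the partial sums are dominated by the full fusion frame sums, $\langle S_{\mathcal V \mathcal W_\sigma} f, f\rangle \leq (B+D)\|f\|^2$, whence $\|S_{\mathcal V \mathcal W_\sigma}\| \leq B + D$ for every $\sigma$. Consequently, being a weaving fusion frame is equivalent to the existence of a uniform lower bound on this quadratic form, and the entire content of the lemma is to trade that lower bound for the operator-norm lower bound in (2).

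For the direction $(1)\implies(2)$, suppose the families are weaving fusion frames with universal lower bound $A_{\mathcal V \mathcal W}$. Then $A_{\mathcal V \mathcal W}\|f\|^2 \leq \langle S_{\mathcal V \mathcal W_\sigma} f, f\rangle$, and a single application of the Cauchy--Schwarz inequality gives $\langle S_{\mathcal V \mathcal W_\sigma} f, f\rangle \leq \|S_{\mathcal V \mathcal W_\sigma} f\|\,\|f\|$. Combining these yields $\|S_{\mathcal V \mathcal W_\sigma} f\| \geq A_{\mathcal V \mathcal W}\|f\|$, so $\alpha = A_{\mathcal V \mathcal W}$ works and is manifestly independent of $\sigma$.

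The reverse direction $(2)\implies(1)$ is where the real work lies, and I expect it to be the main obstacle. Here I would use the operator inequality $S_{\mathcal V \mathcal W_\sigma}^2 \leq \|S_{\mathcal V \mathcal W_\sigma}\|\,S_{\mathcal V \mathcal W_\sigma}$, valid for any positive operator since $\lambda^2 \leq \|S\|\lambda$ holds throughout its spectrum. Evaluating the quadratic form gives
\[
\|S_{\mathcal V \mathcal W_\sigma} f\|^2 = \langle S_{\mathcal V \mathcal W_\sigma}^2 f, f\rangle \leq \|S_{\mathcal V \mathcal W_\sigma}\|\,\langle S_{\mathcal V \mathcal W_\sigma} f, f\rangle \leq (B+D)\,\langle S_{\mathcal V \mathcal W_\sigma} f, f\rangle,
\]
where the uniform bound $\|S_{\mathcal V \mathcal W_\sigma}\| \leq B+D$ from the first paragraph is essential. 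Feeding in the hypothesis $\|S_{\mathcal V \mathcal W_\sigma} f\| \geq \alpha\|f\|$ then produces
\[
\langle S_{\mathcal V \mathcal W_\sigma} f, f\rangle \geq \frac{\|S_{\mathcal V \mathcal W_\sigma} f\|^2}{B+D} \geq \frac{\alpha^2}{B+D}\,\|f\|^2,
\]
a lower bound with the $\sigma$-independent constant $\tfrac{\alpha^2}{B+D}$. Together with the automatic upper bound, this verifies \eqref{weaving} uniformly over all $\sigma$, so the two families form weaving fusion frames. The delicate point throughout is ensuring that the norm bound $\|S_{\mathcal V \mathcal W_\sigma}\| \leq B+D$ is uniform in $\sigma$; without the individual fusion frame bounds $B$ and $D$ one could not pass from the operator-norm lower bound back to a quadratic-form lower bound with a constant free of $\sigma$.
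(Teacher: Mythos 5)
Your proposal is correct and follows essentially the same route as the paper: the forward direction is the identical Cauchy--Schwarz argument on the quadratic form, and your use of $S_{\mathcal V \mathcal W_\sigma}^2 \leq \|S_{\mathcal V \mathcal W_\sigma}\|\, S_{\mathcal V \mathcal W_\sigma}$ in the converse is the same estimate the paper obtains by factoring $S_{\mathcal V \mathcal W_\sigma} = T_{\mathcal V \mathcal W_\sigma} T_{\mathcal V \mathcal W_\sigma}^*$ and bounding $\|T_{\mathcal V \mathcal W_\sigma}\|^2$. The only difference is that you prove the uniform upper bound $\langle S_{\mathcal V \mathcal W_\sigma} f, f\rangle \leq (B+D)\|f\|^2$ directly by dominating the partial sums, where the paper cites an external proposition, so your version is self-contained (and your constant $B+D$ is in fact the correct one, whereas the paper's $B^2+D^2$ need not dominate $\|T_{\mathcal V \mathcal W_\sigma}\|^2$ when $B,D<1$).
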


\begin{proof}
\noindent (\underline{1 $\implies$ 2}) If $\{(\mathcal V_i, v_i)\}_{i \in I}$ and $\{(\mathcal W_i, w_i)\}_{i \in I}$ are weaving fusion frames for $\mathcal H$ with the universal bounds $\alpha \leq \beta$, then for every $\sigma \subset I$ and every $f \in \mathcal H$ we have,
\begin{equation}\label{weaving eqn}
\alpha \|f\|^2 \leq \sum\limits_{i \in \sigma} v_i^2 \|P_{\mathcal V_i}f\|^2 +   \sum\limits_{i \in \sigma^c} w_i^2  \|P_{\mathcal W_i}f\|^2 \leq \beta \|f\|^2.
\end{equation}
Suppose $S_{\mathcal V \mathcal W_{\sigma}}$ is the fusion frame operator for the associated weaving, then using inequality (\ref{weaving eqn}) we have,
$\alpha \|f\|^2 \leq \langle S_{\mathcal V \mathcal W_{\sigma}} f, f \rangle \leq \beta \|f\|^2.$

Thus, $\|S_{\mathcal V \mathcal W_{\sigma}}f \| = \sup\limits_{\|g\|=1} |\langle S_{\mathcal V \mathcal W_{\sigma}}f, g \rangle| \geq \left \langle S_{\mathcal V \mathcal W_{\sigma}}f, \frac {f} {\|f\|} \right \rangle \geq \alpha \|f\|.$\\

\noindent (\underline{2 $\implies$ 1}) Applying Remark (\ref{Remark}), let us suppose for every $\sigma \subset I$, $T_{\mathcal V \mathcal W_{\sigma}}$ and $T_{\mathcal V \mathcal W_{\sigma}}^*$ be the associated weaving synthesis and analysis operators respectively. Then for every $f \in \mathcal H$ we have,

$\alpha^2 \|f\|^2 \leq \|S_{\mathcal V \mathcal W_{\sigma}}f \|^2=\|T_{\mathcal V \mathcal W_{\sigma}} T_{\mathcal V \mathcal W_{\sigma}}^* f\|^2 \leq \|T_{\mathcal V \mathcal W_{\sigma}}\|^2 \|T_{\mathcal V \mathcal W_{\sigma}}^* f\|^2$ and hence we obtain,

$\sum\limits_{i \in \sigma} v_i^2 \|P_{\mathcal V_i}f\|^2 +   \sum\limits_{i \in \sigma^c} w_i^2  \|P_{\mathcal W_i}f\|^2 =\|T_{\mathcal V \mathcal W_{\sigma}}^* f\|^2 \geq \frac {\alpha^2} {B^2+D^2} \|f\|^2.$

On the other hand the universal upper bound for the corresponding weaving will be obtained from the [Proposition 3.1, \cite{Be16}].
\end{proof}

\begin{prop}
Let $\mathcal V_v=\{(\mathcal V_i, v_i)\}_{i \in I}$ and $\mathcal W_w=\{(\mathcal W_i, w_i)\}_{i \in I}$ be two weighted sequences of closed subspaces of $\mathcal H$. Then the following are equivalent:

\begin{enumerate}
\item $\mathcal V_v$ and $\mathcal W_w$ are weaving fusion Bessel sequences for $\mathcal H$.

\item For every $\sigma \subset I$, the corresponding weaving synthesis operator $T_{\mathcal V \mathcal W_{\sigma}}: \mathcal L_{\mathcal V \mathcal W}^2=\left(\sum\limits_{i \in \sigma} \bigoplus \mathcal V_i + \sum\limits_{i \in \sigma^c} \bigoplus \mathcal W_i\right)_{l^2} \rightarrow \mathcal H$ is bounded.

\item For every $\sigma \subset I$, the associated weaving analysis operator $T_{\mathcal V \mathcal W_{\sigma}}^*: \mathcal H \rightarrow \mathcal L_{\mathcal V \mathcal W}^2$ is bounded.

\item For every $\sigma \subset I$, the corresponding weaving fusion frame operator $S_{\mathcal V \mathcal W_{\sigma}} : \mathcal H \rightarrow \mathcal H$ is bounded.
\end{enumerate}
\end{prop}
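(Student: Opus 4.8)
The plan is to route every equivalence through the weaving analysis operator $T^*_{\mathcal V \mathcal W_{\sigma}}$ and a single master identity. Following Remark \ref{Remark}, for each fixed $\sigma \subset I$ the weaving analysis operator acts by $T^*_{\mathcal V \mathcal W_{\sigma}}f = \{v_i P_{\mathcal V_i}f\}_{i \in \sigma} \cup \{w_i P_{\mathcal W_i}f\}_{i \in \sigma^c}$, the weaving synthesis operator $T_{\mathcal V \mathcal W_{\sigma}}$ is its adjoint, and the weaving fusion frame operator factors as $S_{\mathcal V \mathcal W_{\sigma}} = T_{\mathcal V \mathcal W_{\sigma}} T^*_{\mathcal V \mathcal W_{\sigma}}$. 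First I would record the identity
\[
\|T^*_{\mathcal V \mathcal W_{\sigma}}f\|_{\mathcal L_{\mathcal V \mathcal W}^2}^2 = \langle S_{\mathcal V \mathcal W_{\sigma}}f, f\rangle = \sum_{i \in \sigma} v_i^2 \|P_{\mathcal V_i}f\|^2 + \sum_{i \in \sigma^c} w_i^2 \|P_{\mathcal W_i}f\|^2,
\]
which is exactly the quadratic form controlled in the weaving inequality (\ref{weaving}). Everything then reduces to bounding this quantity, the subtlety being whether the bound can be taken uniformly in $\sigma$.

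With the identity in hand the operator equivalences are routine. For a fixed $\sigma$, $T_{\mathcal V \mathcal W_{\sigma}}$ and $T^*_{\mathcal V \mathcal W_{\sigma}}$ are mutually adjoint, so one is bounded iff the other is, with equal norm; this gives $(2) \Leftrightarrow (3)$, modulo the denseness of the domain of the synthesis operator, handled exactly as in Section \ref{Fusion}. For $(3) \Leftrightarrow (4)$ I would argue both directions from the factorization: if $T^*_{\mathcal V \mathcal W_{\sigma}}$ (hence $T_{\mathcal V \mathcal W_{\sigma}}$) is bounded then so is the composition $S_{\mathcal V \mathcal W_{\sigma}}$, while conversely the identity yields $\|T^*_{\mathcal V \mathcal W_{\sigma}}f\|^2 = \langle S_{\mathcal V \mathcal W_{\sigma}}f, f\rangle \leq \|S_{\mathcal V \mathcal W_{\sigma}}\| \, \|f\|^2$, so boundedness of $S_{\mathcal V \mathcal W_{\sigma}}$ forces boundedness of $T^*_{\mathcal V \mathcal W_{\sigma}}$. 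Finally $(1) \Rightarrow (3)$ is immediate: a universal weaving Bessel bound $B$ gives $\|T^*_{\mathcal V \mathcal W_{\sigma}}f\|^2 \leq B\|f\|^2$ for every $\sigma$, so each analysis operator is bounded by $\sqrt{B}$.

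The main obstacle is the reverse implication $(3) \Rightarrow (1)$, because conditions (2)--(4) only assert boundedness separately for each $\sigma$, whereas the weaving Bessel property of (1) demands a single bound valid for all $\sigma$ at once. I would close this gap by exploiting the two extreme choices of $\sigma$. Taking $\sigma = I$ and $\sigma = \emptyset$ in (3) shows, via the identity, that $\mathcal V_v$ and $\mathcal W_w$ are each ordinary fusion Bessel sequences, say with bounds $B$ and $D$. Then for an arbitrary $\sigma$ the nonnegativity of each summand gives the monotone estimate
\[
\sum_{i \in \sigma} v_i^2 \|P_{\mathcal V_i}f\|^2 + \sum_{i \in \sigma^c} w_i^2 \|P_{\mathcal W_i}f\|^2 \leq \sum_{i \in I} v_i^2 \|P_{\mathcal V_i}f\|^2 + \sum_{i \in I} w_i^2 \|P_{\mathcal W_i}f\|^2 \leq (B+D)\|f\|^2,
\]
so $B+D$ serves as a universal weaving Bessel bound, establishing (1). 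This uniformization is precisely the fusion-frame analogue of [Proposition 3.1, \cite{Be16}] invoked in the previous lemma, and it is the only place where passing from ``every $\sigma$'' to ``uniformly in $\sigma$'' genuinely requires an argument.
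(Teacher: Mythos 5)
Your proposal is correct, and it is organized differently from the paper's proof in a way that actually repairs a soft spot there. The paper proves the cycle $(1)\Rightarrow(2)\Rightarrow(3)\Rightarrow(4)\Rightarrow(1)$: the first implication by estimating $\left\|\sum_{i\in\sigma}v_iP_{\mathcal V_i}f_i+\sum_{i\in\sigma^c}w_iP_{\mathcal W_i}g_i\right\|$ directly with Cauchy--Schwarz against the weaving Bessel bound, the middle two implications ``analogously,'' and $(4)\Rightarrow(1)$ via $\langle S_{\mathcal V\mathcal W_{\sigma}}f,f\rangle\le\|S_{\mathcal V\mathcal W_{\sigma}}f\|\,\|f\|$. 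In that last step the paper writes a single constant $B_{\mathcal V\mathcal W}$ bounding $S_{\mathcal V\mathcal W_{\sigma}}$ for all $\sigma$, i.e.\ it tacitly assumes the per-$\sigma$ operator norms are uniformly bounded, which is more than statement $(4)$ literally asserts. You isolate exactly this issue and close it: the extreme choices $\sigma=I$ and $\sigma=\emptyset$ show that $\mathcal V_v$ and $\mathcal W_w$ are each ordinary fusion Bessel sequences with bounds $B$ and $D$, and termwise monotonicity of the nonnegative sums then gives the universal weaving bound $B+D$ for every $\sigma$. This is the correct fusion analogue of [Proposition~3.1, \cite{Be16}] and is the only genuinely nontrivial point of the proposition; your version makes the uniformization explicit where the paper leaves it implicit. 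The remainder of your argument --- routing $(2)\Leftrightarrow(3)$ through adjoints (with the density caveat for the synthesis domain) and $(1)\Leftrightarrow(3)\Leftrightarrow(4)$ through the identity $\|T^*_{\mathcal V\mathcal W_{\sigma}}f\|^2=\langle S_{\mathcal V\mathcal W_{\sigma}}f,f\rangle$ --- is sound and, by working with the quadratic form rather than the synthesis operator, also avoids the spurious factor of $2$ and the mislabeled norm in the paper's $(1)\Rightarrow(2)$ computation.
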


\begin{proof}
\noindent (\underline{1 $\implies$ 2}) Let us suppose $\mathcal V_v$ and $\mathcal W_w$ are weaving fusion Bessel sequences for $\mathcal H$ with bound $B_{\mathcal V \mathcal W}$. Then for every $\sigma \subset I$ and every $f \in \mathcal H$ we have,
$ \sum\limits_{i \in \sigma} v_i^2 \|P_{\mathcal V_i}f\|^2 +   \sum\limits_{i \in \sigma^c} w_i^2  \|P_{\mathcal W_i}f\|^2 \leq B_{\mathcal V \mathcal W} \|f\|^2.$
Thus for any $\{f_i\}_{i \in \sigma} \cup \{g_i\}_{i \in \sigma^c} \in \mathcal L_{\mathcal V \mathcal W}^2$ we obtain,
\begin{eqnarray*}
\left\|\sum\limits_{i \in \sigma} v_i P_{\mathcal V_i}f_i + \sum \limits_{i \in \sigma^c} w_i P_{\mathcal W_i}g_i \right\| &=& \sup\limits_{\|g\|=1} \left | \left \langle \sum\limits_{i \in \sigma} v_i P_{\mathcal V_i}f_i + \sum \limits_{i \in \sigma^c} w_i P_{\mathcal W_i}g_i , g \right \rangle \right| \\
&=& \sup\limits_{\|g\|=1}  \left | \left \langle \sum\limits_{i \in \sigma} v_i P_{\mathcal V_i}f_i, g \right \rangle +  \left \langle \sum \limits_{i \in \sigma^c} w_i P_{\mathcal W_i}g_i , g \right \rangle\right| \\
&=& \sup\limits_{\|g\|=1}  \left | \sum\limits_{i \in \sigma} v_i \langle f_i,  P_{\mathcal V_i} g \rangle  + \sum\limits_{i \in \sigma^c} w_i \langle g_i,  P_{\mathcal W_i} g \rangle\right| \\
& \leq &  \sup\limits_{\|g\|=1} \left ( \sum\limits_{i \in \sigma} v_i ^2 \| P_{\mathcal V_i} g\|^2 \|f_i\|^2  \right )^{\frac{1}{2}} \\
& + & \sup\limits_{\|g\|=1} \left ( \sum\limits_{i \in \sigma^c} w_i ^2 \| P_{\mathcal W_i} g\|^2 \|g_i\|^2  \right )^{\frac{1}{2}} \\
& \leq &  \sup\limits_{\|g\|=1} \left ( \sum\limits_{i \in \sigma} v_i ^2 \| P_{\mathcal V_i} g\|^2 \right )^{\frac{1}{2}} \left ( \sum\limits_{i \in \sigma}  \|f_i\|^2 \right )^{\frac{1}{2}} \\
&+&  \sup\limits_{\|g\|=1} \left ( \sum\limits_{i \in \sigma^c} w_i ^2 \| P_{\mathcal W_i} g\|^2 \right )^{\frac{1}{2}} \left ( \sum\limits_{i \in \sigma^c}  \|g_i\|^2 \right )^{\frac{1}{2}} \\
& \leq & 2 \sqrt{B_{\mathcal V \mathcal W}} \|f\|
\end{eqnarray*}
Therefore, $T_{\mathcal V \mathcal W_{\sigma}}$ is bounded.\\

Analogously (\underline{2 $\implies$ 3}) and (\underline{3 $\implies$ 4}) will be satisfied.\\

\noindent (\underline{4 $\implies$ 1}) Suppose  $S_{\mathcal V \mathcal W_{\sigma}}$ is bounded by the bound $B_{\mathcal V \mathcal W}$, then for $\sigma \subset I$ and every $f \in \mathcal H$ we have, 
\begin{eqnarray*}
\sum\limits_{i \in \sigma} v_i^2 \|P_{\mathcal V_i}f\|^2 + \sum \limits_{i \in \sigma^c} w_i^2 \|P_{\mathcal W_i}f\|^2 = \langle S_{\mathcal V \mathcal W_{\sigma}} f, f  \rangle & \leq & \|S_{\mathcal V \mathcal W_{\sigma}} f\| \|f\|\\
&\leq & B_{\mathcal V \mathcal W} \|f\|^2.
\end{eqnarray*}
Hence $\mathcal V_v$ and $\mathcal W_w$ are weaving fusion Bessel sequences.
\end{proof}

\begin{thm}\label{1}
Let $\mathcal V_v=\{(\mathcal V_i, v_i)\}_{i \in I}$ and $\mathcal W_w=\{(\mathcal W_i, w_i)\}_{i \in I}$ be two weighted sequences of closed subspaces of $\mathcal H$. Then the following are equivalent:

\begin{enumerate}
\item $\mathcal V_v$ and $\mathcal W_w$ are weaving fusion frames for $\mathcal H$.

\item For every $\sigma \subset I$, the corresponding weaving synthesis operator $T_{\mathcal V \mathcal W_{\sigma}}: \mathcal L_{\mathcal V \mathcal W}^2=\left(\sum\limits_{i \in \sigma} \bigoplus \mathcal V_i + \sum\limits_{i \in \sigma^c} \bigoplus \mathcal W_i\right)_{l^2} \rightarrow \mathcal H$ is bounded, surjective operator.

\item For every $\sigma \subset I$, the associated weaving analysis operator $T_{\mathcal V \mathcal W_{\sigma}}^*: \mathcal H \rightarrow \mathcal L_{\mathcal V \mathcal W}^2$ is bounded, injective operator and has closed range.
\end{enumerate}
\end{thm}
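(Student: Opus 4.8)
The plan is to route all three equivalences through the factorization $S_{\mathcal V\mathcal W_\sigma}=T_{\mathcal V\mathcal W_\sigma}T_{\mathcal V\mathcal W_\sigma}^*$ together with the pointwise identity
$$\|T_{\mathcal V\mathcal W_\sigma}^* f\|^2 = \sum_{i\in\sigma} v_i^2\|P_{\mathcal V_i}f\|^2 + \sum_{i\in\sigma^c} w_i^2\|P_{\mathcal W_i}f\|^2,$$
which was already recorded in the proof of the preceding Lemma and which shows that the weaving inequality (\ref{weaving}) is nothing but the assertion that $T_{\mathcal V\mathcal W_\sigma}^*$ is simultaneously bounded (the upper, i.e.\ Bessel, bound) and bounded below (the lower bound), uniformly in $\sigma$. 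First I would invoke the preceding Proposition so that in each of the three conditions the boundedness part is matched automatically: a weaving fusion frame is in particular a weaving fusion Bessel pair, hence $T_{\mathcal V\mathcal W_\sigma}$ and $T_{\mathcal V\mathcal W_\sigma}^*$ are bounded, and conversely boundedness of either operator supplies the upper frame bound. This reduces the theorem to tracking the lower bound, encoded as surjectivity of $T_{\mathcal V\mathcal W_\sigma}$ and as injectivity-with-closed-range of $T_{\mathcal V\mathcal W_\sigma}^*$.

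For $(1)\Rightarrow(2)$ I would observe that the lower bound in (\ref{weaving}) gives $\langle S_{\mathcal V\mathcal W_\sigma}f,f\rangle\ge \alpha\|f\|^2$, so the positive self-adjoint operator $S_{\mathcal V\mathcal W_\sigma}$ is bounded below and therefore invertible; since $S_{\mathcal V\mathcal W_\sigma}=T_{\mathcal V\mathcal W_\sigma}T_{\mathcal V\mathcal W_\sigma}^*$ is then onto, $T_{\mathcal V\mathcal W_\sigma}$ is onto as well. The implication $(2)\Rightarrow(3)$ is the standard Hilbert-space duality: a bounded surjection has a bounded adjoint that is bounded below, equivalently injective with closed range, which is precisely the content of the closed-range/open-mapping theorem. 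Finally $(3)\Rightarrow(1)$ runs this duality in reverse---injectivity together with closed range make $T_{\mathcal V\mathcal W_\sigma}^*$ bounded below, and by the displayed identity this bounded-below estimate is exactly the lower weaving bound, while boundedness furnishes the upper bound, so (\ref{weaving}) holds.

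The main obstacle, which I would isolate and handle separately, is the uniformity of the constants in $\sigma$. The operator-theoretic facts above are purely per-$\sigma$: for a fixed $\sigma$ they yield constants $\alpha_\sigma,\beta_\sigma$, whereas the definition of weaving demands a universal lower bound $\alpha=\inf_\sigma\alpha_\sigma>0$ and a universal upper bound $\beta=\sup_\sigma\beta_\sigma<\infty$. For the universal upper bound I would argue as in the preceding Lemma and appeal to [Proposition 3.1, \cite{Be16}], which produces a common Bessel bound across all weavings once the two families are individually fusion Bessel. The universal lower bound is the genuinely delicate point: I would read condition $(3)$ as asserting that $T_{\mathcal V\mathcal W_\sigma}^*$ is bounded below with a constant that is \emph{uniform} in $\sigma$, so that $\inf_\sigma\alpha_\sigma>0$, and then cross-check this against the preceding Lemma, whose $\sigma$-independent estimate $\|S_{\mathcal V\mathcal W_\sigma}f\|\ge\alpha\|f\|$ is exactly the quantity needed to close the loop back to (\ref{weaving}).
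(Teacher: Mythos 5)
Your operator-theoretic core is sound and is essentially the standard argument: the identity $\|T_{\mathcal V\mathcal W_{\sigma}}^* f\|^2 = \sum_{i\in\sigma} v_i^2\|P_{\mathcal V_i}f\|^2 + \sum_{i\in\sigma^c} w_i^2\|P_{\mathcal W_i}f\|^2$ converts the weaving inequality into boundedness plus bounded-belowness of the analysis operator, and the equivalences between surjectivity of $T$, bounded-belowness of $T^*$, and invertibility of $S=TT^*$ are exactly the closed-range-theorem facts you cite. For comparison, the paper offers no argument at all here --- its proof is the single sentence ``The proof is easily followed from \cite{CaKuLi08}'', i.e.\ it delegates to the per-$\sigma$ fusion-frame case. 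So your write-up is not a different route; it is the route the paper is implicitly pointing at, actually carried out.

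The one place where your proposal does not close the argument is the point you yourself isolate: uniformity of the lower bound over $\sigma$. Conditions (2) and (3) as literally stated are per-$\sigma$ assertions, so the closed-range machinery only yields constants $\alpha_\sigma$ with no control on $\inf_\sigma \alpha_\sigma$, whereas the definition of weaving in (\ref{weaving}) demands a single $A>0$ for all $\sigma$. Your proposed fix for the lower bound --- ``read condition (3) as asserting that $T_{\mathcal V\mathcal W_\sigma}^*$ is bounded below with a constant uniform in $\sigma$'' --- amounts to strengthening the hypothesis rather than proving the implication, and the appeal to the preceding Lemma is circular for this purpose, since that Lemma already assumes the two families are woven. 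To genuinely close $(2)\Rightarrow(1)$ or $(3)\Rightarrow(1)$ one needs either to build the uniform constant into the statement of (2)/(3), or to invoke a ``weakly woven implies woven'' result in the spirit of \cite{Be16} (where it is shown for ordinary frames that if every weaving is a frame then universal bounds exist automatically) adapted to fusion frames. The universal upper bound is unproblematic, as you say, via the sum-of-Bessel-bounds argument of [Proposition 3.1, \cite{Be16}]. In short: correct skeleton, honestly flagged gap, but the gap is real and your patch for it does not yet constitute a proof.
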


\begin{proof}
The proof is easily followed from \cite{CaKuLi08}.
\end{proof}

\begin{lem}\label{Lemma}
For every $\sigma \subset I$, let us consider the associated Hilbert direct sum $\left(\sum\limits_{i \in \sigma} \bigoplus \mathcal V_i + \sum\limits_{i \in \sigma^c} \bigoplus \mathcal W_i\right)_{l^2}$ of the closed subspaces $\{\mathcal V_i \}_{i \in I}$ and $\{\mathcal W_i \}_{i \in I}$. Suppose $\mathcal U_i \subset \mathcal V_i$ and $\mathcal X_i \subset \mathcal W_i$ for every $i \in I$. Then 
$\left(\sum\limits_{i \in \sigma} \bigoplus \mathcal U_i + \sum\limits_{i \in \sigma^c} \bigoplus \mathcal X_i\right)_{l^2} ^{\perp} = \left(\sum\limits_{i \in \sigma} \bigoplus \mathcal U_i^{\perp} + \sum\limits_{i \in \sigma^c} \bigoplus \mathcal X_i^{\perp} \right)_{l^2} $
\end{lem}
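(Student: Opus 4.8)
The plan is to verify the set equality by proving the two inclusions, after first reducing orthogonality in the $l^2$ direct sum to a coordinatewise condition. Write $\mathcal{K} = \left(\sum_{i \in \sigma} \bigoplus \mathcal V_i + \sum_{i \in \sigma^c} \bigoplus \mathcal W_i\right)_{l^2}$ for the ambient Hilbert space in which the complement $\perp$ is taken, and set $\mathcal{L} = \left(\sum_{i \in \sigma} \bigoplus \mathcal U_i + \sum_{i \in \sigma^c} \bigoplus \mathcal X_i\right)_{l^2}$. A generic element of $\mathcal{K}$ is a sequence $\{g_i\}_{i \in I}$ with $g_i \in \mathcal V_i$ for $i \in \sigma$, $g_i \in \mathcal W_i$ for $i \in \sigma^c$, and $\sum_{i}\|g_i\|^2 < \infty$; by definition it lies in $\mathcal{L}^{\perp}$ precisely when $\sum_{i} \langle g_i, h_i \rangle = 0$ for every $\{h_i\}_{i \in I} \in \mathcal{L}$. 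Here $\mathcal U_i^{\perp}$ and $\mathcal X_i^{\perp}$ denote the orthogonal complements taken inside $\mathcal V_i$ and $\mathcal W_i$ respectively, which is what makes the right-hand side a subspace of $\mathcal{K}$.

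First I would establish the inclusion $\supseteq$: suppose $g_i \in \mathcal U_i^{\perp}$ for $i \in \sigma$ and $g_i \in \mathcal X_i^{\perp}$ for $i \in \sigma^c$. Then for any $\{h_i\} \in \mathcal{L}$ the entry $h_i$ lies in $\mathcal U_i$ (resp. $\mathcal X_i$), so each summand $\langle g_i, h_i \rangle$ vanishes, whence $\sum_i \langle g_i, h_i \rangle = 0$ and $\{g_i\} \in \mathcal{L}^{\perp}$. For the reverse inclusion $\subseteq$, take $\{g_i\} \in \mathcal{L}^{\perp}$ and fix an index $k$. If $k \in \sigma$, then for arbitrary $u \in \mathcal U_k$ the finitely supported element of $\mathcal{L}$ whose $k$-th entry is $u$ and whose other entries are $0$ can be paired against $\{g_i\}$, forcing $\langle g_k, u \rangle = 0$; since $u \in \mathcal U_k$ was arbitrary, $g_k \perp \mathcal U_k$, and because $g_k \in \mathcal V_k$ this says exactly $g_k \in \mathcal U_k^{\perp}$. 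The case $k \in \sigma^c$ is identical, working with $\mathcal X_k \subset \mathcal W_k$. Letting $k$ range over $I$ places $\{g_i\}$ in the right-hand side, and the two inclusions give the equality.

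The only step requiring care — and the main (mild) obstacle — is the coordinate decoupling used in the reverse inclusion: one must justify testing orthogonality on single-coordinate vectors rather than on all of $\mathcal{L}$. This is legitimate precisely because such single-coordinate vectors already belong to $\mathcal{L}$ (they are finitely supported), so no density or limiting argument is needed; it is this, together with the inherited membership $g_k \in \mathcal V_k$ (resp. $\mathcal W_k$) coming from $\{g_i\} \in \mathcal{K}$, that converts the relative orthogonality $g_k \perp \mathcal U_k$ into the stated complement $\mathcal U_k^{\perp}$ inside $\mathcal V_k$. Everything else is a routine manipulation of the convergent inner-product series on $\mathcal{K}$.
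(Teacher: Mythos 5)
Your proof is correct, but it is genuinely different from what the paper does: the paper offers no argument at all for this lemma, merely deferring to the reference [Ko23] (Kohldorfer--Balazs), whereas you give a complete, self-contained verification. Your two-inclusion argument is the standard one and it works: the inclusion $\supseteq$ is immediate from coordinatewise vanishing of the inner-product series, and the inclusion $\subseteq$ is obtained by testing against singly-supported sequences, which you correctly observe already lie in $\mathcal{L}$ so that no density argument is needed. You also handle the one point the paper's statement leaves ambiguous, namely that $\mathcal U_i^{\perp}$ and $\mathcal X_i^{\perp}$ must be read as relative complements inside $\mathcal V_i$ and $\mathcal W_i$ respectively (otherwise the right-hand side is not even a subspace of the ambient space $\mathcal K$ in which the left-hand complement is formed); making this convention explicit is exactly what turns the relative orthogonality $g_k \perp \mathcal U_k$ into membership in $\mathcal U_k^{\perp}$. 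What the citation-only route buys the paper is brevity; what your route buys is a proof the reader can actually check, at the cost of a few lines of routine computation. No gaps.
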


\begin{proof}
The proof is followed from \cite{Ko23}.
\end{proof}

Let us define weaving fusion Riesz bases analogous to fusion Riesz basis \cite{CaKu04, Sh20}.

Suppose $\mathcal V_v=\{(\mathcal V_i, v_i)\}_{i \in I}$ and $\mathcal W_w=\{(\mathcal W_i, w_i)\}_{i \in I}$ be two weighted sequences of closed subspaces of $\mathcal H$. Then they are said to be weaving fusion Riesz bases if for every $\sigma \subset I$ and every $\{f_i\}_{i \in \sigma} \cup \{g_i\}_{i \in \sigma^c} \in \mathcal L_{\mathcal V \mathcal W}^{00}$ there are finite positive constants $A_{\mathcal V \mathcal W} \leq B_{\mathcal V \mathcal W}$ we have,

\begin{equation}\label{fusion Riesz}
A_{\mathcal V \mathcal W} \left (\sum\limits_{i \in \sigma} \|f_i\|^2 + \sum\limits_{i \in \sigma^c} \|g_i\|^2 \right) \leq \left\|\sum\limits_{i \in \sigma} v_i f_i + \sum\limits_{i \in \sigma^c} w_i g_i \right\|^2 \leq B_{\mathcal V \mathcal W} \left (\sum\limits_{i \in \sigma} \|f_i\|^2 + \sum\limits_{i \in \sigma^c} \|g_i\|^2 \right)
\end{equation}
Furthermore, $\{\mathcal V_i\}_{i \in \sigma} \cup \{\mathcal W_i\}_{i \in \sigma^c} $ is called an orthonormal weaving fusion basis in $\mathcal H$ if $\mathcal I_{\mathcal H}=\sum\limits_{i \in \sigma} P_{\mathcal V_i} + \sum\limits_{i \in \sigma^c}  P_{\mathcal W_i}$ and $\mathcal V_i \perp \mathcal V_j$, $\mathcal W_i \perp \mathcal W_j$ for every $i \neq j$.

We characterize weaving fusion Riesz bases using weaving fusion frame synthesis and analysis operators, establishing their structural properties. This characterization leads to the fact that every weaving fusion Riesz basis is also a weaving fusion frame, underscoring their dual role in the frame theory.

\begin{thm}
$\mathcal V_v=\{(\mathcal V_i, v_i)\}_{i \in I}$ and $\mathcal W_w=\{(\mathcal W_i, w_i)\}_{i \in I}$ be two weighted sequences of closed subspaces of $\mathcal H$. Then the following are equivalent:
\begin{enumerate}
\item $\mathcal V_v$ and $\mathcal W_w$ are weaving fusion Riesz bases in $\mathcal H$.

\item For every $\sigma \subset I$, the corresponding weaving synthesis operator $T_{\mathcal V \mathcal W}: \mathcal L_{\mathcal V \mathcal W}^2=\left(\sum\limits_{i \in \sigma} \bigoplus \mathcal V_i + \sum\limits_{i \in \sigma^c} \bigoplus \mathcal W_i\right)_{l^2} \rightarrow \mathcal H$ is bounded with $R(T_{\mathcal V \mathcal W})=\mathcal H$ and $N(T_{\mathcal V \mathcal W})=(\mathcal L_{\mathcal V \mathcal W}^2)^{\perp}$.

\item For every $\sigma \subset I$, the associated weaving analysis operator $T_{\mathcal V \mathcal W}^*: \mathcal H \rightarrow \mathcal L_{\mathcal V \mathcal W}^2$ is bounded with $R(T_{\mathcal V \mathcal W}^*)=\mathcal L_{\mathcal V \mathcal W}^2$ and $N(T_{\mathcal V \mathcal W}^*)=\{0\}$.
\end{enumerate}
\end{thm}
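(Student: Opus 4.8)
The plan is to observe that the weaving fusion Riesz basis inequality (\ref{fusion Riesz}) is exactly a two-sided bound on the weaving synthesis operator, after which the three conditions are linked by routine operator theory and Lemma \ref{Lemma}. Reading $T_{\mathcal V \mathcal W}$ as defined on the full direct sum $\mathcal L_{\mathcal H}^2$ as in Section \ref{Fusion}, I note that for $F = \{f_i\}_{i \in \sigma} \cup \{g_i\}_{i \in \sigma^c}$ with $f_i \in \mathcal V_i$, $g_i \in \mathcal W_i$ one has $P_{\mathcal V_i} f_i = f_i$ and $P_{\mathcal W_i} g_i = g_i$, so $T_{\mathcal V \mathcal W} F = \sum_{i \in \sigma} v_i f_i + \sum_{i \in \sigma^c} w_i g_i$; thus (\ref{fusion Riesz}) says precisely $A_{\mathcal V \mathcal W}\|F\|^2 \le \|T_{\mathcal V \mathcal W}F\|^2 \le B_{\mathcal V \mathcal W}\|F\|^2$ for all $F$ in the dense subset $\mathcal L_{\mathcal V \mathcal W}^{00}$ of $\mathcal L_{\mathcal V \mathcal W}^2$.

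For $(1) \Rightarrow (2)$ I would argue as follows. The upper bound shows $T_{\mathcal V \mathcal W}$ is bounded on $\mathcal L_{\mathcal V \mathcal W}^{00}$, so it extends continuously to $\mathcal L_{\mathcal V \mathcal W}^2$ with norm at most $\sqrt{B_{\mathcal V \mathcal W}}$; since by Lemma \ref{Lemma} we have $(\mathcal L_{\mathcal V \mathcal W}^2)^\perp = \left(\sum_{i \in \sigma} \bigoplus \mathcal V_i^\perp + \sum_{i \in \sigma^c} \bigoplus \mathcal W_i^\perp\right)_{l^2}$, on which $P_{\mathcal V_i}$ and $P_{\mathcal W_i}$ vanish, $T_{\mathcal V \mathcal W}$ is bounded on all of $\mathcal L_{\mathcal H}^2$ and annihilates $(\mathcal L_{\mathcal V \mathcal W}^2)^\perp$. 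The lower bound, extended by density, makes $T_{\mathcal V \mathcal W}$ bounded below on $\mathcal L_{\mathcal V \mathcal W}^2$, hence injective there with closed range; combined with the vanishing on the complement this gives $N(T_{\mathcal V \mathcal W}) = (\mathcal L_{\mathcal V \mathcal W}^2)^\perp$. The converse $(2) \Rightarrow (1)$ reverses this: restricting the bounded, surjective $T_{\mathcal V \mathcal W}$ to the closed subspace $\mathcal L_{\mathcal V \mathcal W}^2 = N(T_{\mathcal V \mathcal W})^\perp$ yields a bounded bijection onto $\mathcal H$, whose inverse is bounded by the open mapping theorem; the resulting lower bound together with $\|T_{\mathcal V \mathcal W}\|^2$ recovers both constants in (\ref{fusion Riesz}).

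The equivalence $(2) \Leftrightarrow (3)$ is then purely functional-analytic through the adjoint identities $N(T_{\mathcal V \mathcal W}) = R(T_{\mathcal V \mathcal W}^*)^\perp$, $N(T_{\mathcal V \mathcal W}^*) = R(T_{\mathcal V \mathcal W})^\perp$, and the closed range theorem (so $R(T_{\mathcal V \mathcal W})$ is closed iff $R(T_{\mathcal V \mathcal W}^*)$ is). Surjectivity of $T_{\mathcal V \mathcal W}$ is equivalent to $N(T_{\mathcal V \mathcal W}^*) = \{0\}$ together with closed range, and, since $R(T_{\mathcal V \mathcal W}^*)$ is always contained in $\mathcal L_{\mathcal V \mathcal W}^2$, the identity $N(T_{\mathcal V \mathcal W}) = (\mathcal L_{\mathcal V \mathcal W}^2)^\perp$ translates into $R(T_{\mathcal V \mathcal W}^*) = N(T_{\mathcal V \mathcal W})^\perp = \mathcal L_{\mathcal V \mathcal W}^2$.

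I expect the genuine obstacle to sit in the surjectivity claim $R(T_{\mathcal V \mathcal W}) = \mathcal H$ of $(1) \Rightarrow (2)$: the inequality (\ref{fusion Riesz}) by itself only forces the range to be closed, not to fill $\mathcal H$, so one must invoke the spanning/completeness that the word ``basis'' carries in order to exclude a proper closed range. The other delicate points are the density argument used to pass from $\mathcal L_{\mathcal V \mathcal W}^{00}$ to $\mathcal L_{\mathcal V \mathcal W}^2$, and a careful application of Lemma \ref{Lemma} (with the ambient space taken to be $\mathcal L_{\mathcal H}^2$) so that the null-space computation is valid.
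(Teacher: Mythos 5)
Your proposal is correct and follows essentially the same route as the paper: both identify inequality (\ref{fusion Riesz}) with a two-sided norm bound on $T_{\mathcal V \mathcal W}$, use Lemma \ref{Lemma} to compute $(\mathcal L_{\mathcal V \mathcal W}^2)^{\perp}$ and the null space, extend from $\mathcal L_{\mathcal V \mathcal W}^{00}$ to $\mathcal L_{\mathcal V \mathcal W}^2$ by density, and settle $(2)\Leftrightarrow(3)$ by the standard adjoint/closed-range identities. Your diagnosis of the surjectivity step is exactly right: the paper closes that gap by invoking completeness of $\{\mathcal V_i\}_{i\in\sigma}\cup\{\mathcal W_i\}_{i\in\sigma^c}$ (a hypothesis its stated definition of weaving fusion Riesz basis does not actually include, so you and the paper are making the same implicit assumption), and your direct density-and-continuity extension of the lower bound is in fact cleaner than the paper's detour through Theorem \ref{1} and unconditional convergence.
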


\begin{proof}
Applying Lemma \ref{Lemma}, for every $\sigma \subset I$ the orthogonal complement $(\mathcal L_{\mathcal V \mathcal W}^2)^{\perp}$ in $\mathcal L_{\mathcal H}^2$ is given by $\left(\sum\limits_{i \in \sigma} \bigoplus \mathcal V_i^{\perp} + \sum\limits_{i \in \sigma^c} \bigoplus \mathcal W_i ^{\perp}\right)_{l^2}$. Moreover, we have $T_{\mathcal V \mathcal W} = T_{\mathcal V \mathcal W} P_{\mathcal L_{\mathcal V \mathcal W}^2}$. Thus the condition in $(2)$ is equivalent to the fact that $T_{\mathcal V \mathcal W | \mathcal L_{\mathcal V \mathcal W}^2}$ is bounded, bijective operator and  the condition in $(3)$ is equivalent to the fact that $T_{\mathcal V \mathcal W}^*$ is also bounded,  bijective operator. 

Furthermore, since $R(T_{\mathcal V \mathcal W}^*) \subset \mathcal L_{\mathcal V \mathcal W}^2$, then we have $(T_{\mathcal V \mathcal W | \mathcal L_{\mathcal V \mathcal W}^2})^* = T_{\mathcal V \mathcal W}^*$.\\

\noindent (\underline{1 $\implies$ 2}) Let $\mathcal V_v$ and $\mathcal W_w$ are weaving fusion Riesz bases in $\mathcal H$. Then from the right inequality of the inequality \ref{fusion Riesz} we have, 
$T_{\mathcal V \mathcal W}$ and $T_{\mathcal V \mathcal W}^*$ are bounded.

If possible $T_{\mathcal V \mathcal W | \mathcal L_{\mathcal V \mathcal W}^2}$ is not surjective, then there exists $0 \neq f \in R(T_{\mathcal V \mathcal W | \mathcal L_{\mathcal V \mathcal W}^2})^{\perp}$ with $f \perp (\mathcal V_{i\in \sigma} \cup \mathcal W_{i \in \sigma^c})$ for every $\sigma \subset I$. Again since $\{\mathcal V_i\}_{i \in \sigma} \cup \{\mathcal W_i\}_{i \in \sigma^c}$ is complete, then there exists a sequence $\{h_n\}_{n=1}^\infty \in \text{span} (\{\mathcal V_i\}_{i \in \sigma} \cup \{\mathcal W_i\}_{i \in \sigma^c})$ so that we have,
\begin{eqnarray*}
0=\lim\limits_{n \rightarrow \infty} \|f - h_n\| = \lim\limits_{n \rightarrow \infty} (\|f\|^2 - \langle f, h_n \rangle - \langle h_n, f  \rangle +\|h_n\|^2)&=& 2\|f\|^2 \\
&>&0,
\end{eqnarray*}
which is a contradiction. Thus $T_{\mathcal V \mathcal W | \mathcal L_{\mathcal V \mathcal W}^2}$ is surjective.

Applying Theorem \ref{1}, for every $\sigma \subset I$, $\{(\mathcal V_i, v_i)\}_{i \in \sigma} \cup \{(\mathcal W_i, w_i)\}_{i \in \sigma^c}$ is a fusion frame and hence $\left (\sum\limits_{i \in \sigma}v_i f_i + \sum\limits_{i \in \sigma^c}w_i g_i \right )$ converges unconditionally, where $\{f_i\}_{i \in \sigma} \cup \{g_i\}_{i \in \sigma^c} \in  \mathcal L_{\mathcal V \mathcal W}^2$.

Furthermore, from the left inequality of the inequality \ref{fusion Riesz} it is easy to see that $T_{\mathcal V \mathcal W | \mathcal L_{\mathcal V \mathcal W}^{00}}$ is injective. Since $T_{\mathcal V \mathcal W}$ is bounded, $\mathcal L_{\mathcal V \mathcal W}^{00}$ is dense in $\mathcal L_{\mathcal V \mathcal W}^2$ and $\left (\sum\limits_{i \in \sigma}v_i f_i + \sum\limits_{i \in \sigma^c}w_i g_i \right )$ is unconditionally convergent, then the left inequality of the inequality \ref{fusion Riesz} will hold for every $\{f_i\}_{i \in \sigma} \cup \{g_i\}_{i \in \sigma^c} \in  \mathcal L_{\mathcal V \mathcal W}^2$, for every $\sigma \subset I$. Therefore, $T_{\mathcal V \mathcal W | \mathcal L_{\mathcal V \mathcal W}^2}$ is injective and hence $T_{\mathcal V \mathcal W | \mathcal L_{\mathcal V \mathcal W}^2}$ is bijective.\\

\noindent (\underline{2 $\implies$ 1}) Let  $T_{\mathcal V \mathcal W | \mathcal L_{\mathcal V \mathcal W}^2}$ is bijective operator. Then the inequality \ref{fusion Riesz} will hold for every $\{f_i\}_{i \in \sigma} \cup \{g_i\}_{i \in \sigma^c} \in  \mathcal L_{\mathcal V \mathcal W}^2$, for every $\sigma \subset I$. 

If possible for every $\sigma \subset I$, $\{\mathcal V_i\}_{i \in \sigma} \cup \{\mathcal W_i\}_{i \in \sigma^c}$ is not complete, then there exists $0 \neq f \in \mathcal H$ so that $f \perp \overline{\text{span}} (\{\mathcal V_i\}_{i \in \sigma} \cup \{\mathcal W_i\}_{i \in \sigma^c})$. Therefore, $T_{\mathcal V \mathcal W}^*f=0$, which is a contradiction. Hence $\mathcal V_v$ and $\mathcal W_w$ are weaving fusion Riesz bases.

Finally, applying Theorem \ref{1}, we have every weaving fusion Riesz basis is also weaving fusion frame. Therefore, $T_{\mathcal V \mathcal W} T_{\mathcal V \mathcal W}^*$ and $T_{\mathcal V \mathcal W}^* T_{\mathcal V \mathcal W}$ have the same non-zero spectrum. Thus weaving fusion frames and weaving fusion Riesz bases have same bounds.\\

\noindent (\underline{2 $\Longleftrightarrow$ 3}) This will hold for a bounded bijective operator and its Hilbert adjoint (see \cite{Co90}).
\end{proof}




\end{document}